\DeclarePairedDelimiterX\set[1]\lbrace\rbrace{#1}
\newtheorem{theorem}{Theorem}[section]
\newtheorem{corollary}[theorem]{Corollary}
\theoremstyle{definition}
\theoremstyle{remark}
\newtheorem{remark}[theorem]{Remark}
\let\phi=\varphi
\let\oldbigwedge\bigwedge
\def\BIGwedge{{\textstyle\oldbigwedge}}
\def\medwedge{{\scriptstyle\oldbigwedge}}
\def\bigwedge{\mathchoice{\BIGwedge}{\BIGwedge}{\medwedge}{}}
\let\epsilon=\varepsilon
\let\c@equation\c@theorem
\numberwithin{equation}{section}
\begin{document}
\title{On square numbers of some special forms}
	
\author{\bfseries F. Jokar}
	
\address{Faculty of Mathematics\\ Ferdowsi University of Mashhad\\ Mashhad\\ Iran}
	
\email{jokar.farid@gmail.com}

\subjclass[2010]{11R04}
	
\keywords{key words}

\begin{abstract}

We show that there are infinitely many square numbers , which are constrocted by putting two square numbers together , that none of them are divisible by $10$ . We also find out some special square numbers.

\end{abstract}
	
\maketitle
	
\section{Introduction}
It is easy to see that there are infinitely many square numbers which are constructed by putting two square numbers together. For example , $4$ and $9$ are two square numbers. By putting $4$ and $9$ together, we construct $49$ which is another square number. For every $k \in \mathbb{N} $ , $49 \times 10^{2k}$ is a square number which is constructed by putting  $9 \times 10^{2k}$ and $4$ together , in which both $9 \times 10^{2k}$ and $4$ are square numbers. The question is that can we find infinitely many square numbers that none of them are divisible by $10$ and all of them are constructed by putting two square numbers together? In this paper, we prove that not only we can find such numbers ,but also we can find infinitely many square numbers which are constructed by putting two square numbers together , none of them are divisible by $10$ . We also prove that there are infinitely 2-tuples $(c,d)$ that $c$ and $d$ are square numbers , each of them are constructed by putting two square numbers together , that none of them are divisible by $10$ , and also $c|d$ . finally, we prove that for each $s \in \mathbb{N}$ , there are infinitely square numbers , which are constructed by putting $s$ zeros between two square numbers, none of them are divisible by $10$. 

\section{square numbers of some special forms}

\begin{theorem}
There are infinitely many square numbers , which are constructed by putting two square numbers together, none of them are divisible by $10$.
\end{theorem}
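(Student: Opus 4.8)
The plan is to reduce the problem to producing infinitely many solutions of a single Pell-type equation, exploiting the seed example $49 = 4\,|\,9$ from the introduction. Write the decimal concatenation of a square $a^2$ (on the left) with the one-digit square $9 = 3^2$ (on the right) as the integer $10a^2 + 9$. Thus it suffices to find infinitely many pairs of positive integers $(a,c)$ with
\[
c^2 = 10a^2 + 9, \qquad\text{equivalently}\qquad c^2 - 10a^2 = 9 .
\]
For every such pair, $c^2$ is the concatenation of the squares $a^2$ and $3^2$ (the factor $10^1$ shifts $a^2$ by exactly one place, which is the number of digits of $9$), and $c^2$ ends in the digit $9$, so it is not divisible by $10$.

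First I would record the seed solution $a = 2$, $c = 7$, which recovers $49 = 7^2 = 4\,|\,9$, together with the fundamental unit $(x,y) = (19,6)$ of the Pell equation $x^2 - 10y^2 = 1$ (indeed $19^2 - 10\cdot 6^2 = 361 - 360 = 1$). Since the norm form $N(x + y\sqrt{10}) = x^2 - 10y^2$ is multiplicative, multiplying a solution of $c^2 - 10a^2 = 9$ by a solution of $x^2 - 10y^2 = 1$ produces another solution of $c^2 - 10a^2 = 9$. Concretely this gives the recursion
\[
c_{n+1} = 19c_n + 60a_n, \qquad a_{n+1} = 6c_n + 19a_n ,
\]
starting from $(a_0,c_0) = (2,7)$, whose terms strictly increase; the next pairs $(a,c)$ are $(80,253)$ and $(3038,9607)$, giving the squares $64009 = 6400\,|\,9$ and $92294449 = 9229444\,|\,9$.

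The key steps, in order, are: (i) identify the concatenation with $10a^2 + 9$ and translate the two requirements (being a perfect square, and not being divisible by $10$) into the single equation $c^2 - 10a^2 = 9$ together with the automatic last-digit condition; (ii) prove that this equation has infinitely many positive solutions via the multiplicativity of the norm form, i.e. the Pell recursion above; (iii) conclude that the numbers $c_n^2$ are pairwise distinct (as $a_n \to \infty$) and that each is a square obtained by putting two squares together without being divisible by $10$.

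The step I expect to require the most care is (i): one must verify that the purely arithmetic identity $c^2 = 10a^2 + 9$ genuinely encodes the decimal \emph{concatenation} of the two squares, i.e. that $9$ occupies exactly one digit so that multiplying $a^2$ by $10$ appends it with no overlap or carrying. This is immediate here because the right-hand square is the fixed single-digit number $9$; the same scheme works verbatim with right square $1$ (the pure Pell equation $c^2 - 10a^2 = 1$, seeded by $361 = 36\,|\,1$) or with right square $4$ (the equation $c^2 - 10a^2 = 4$, seeded by $1444 = 144\,|\,4$), so there is ample freedom in the construction. The Pell-solvability needed in step (ii) is entirely standard once the fundamental unit is exhibited, and poses no real obstacle.
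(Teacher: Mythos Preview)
Your argument is correct and takes a genuinely different route from the paper. You pin the right-hand block to the single digit $9$, translate the problem into the Pell-type equation $c^{2}-10a^{2}=9$, and manufacture infinitely many solutions from the seed $(a,c)=(2,7)$ via the unit $19+6\sqrt{10}$ of $\ZZ[\sqrt{10}]$. The paper instead uses Pythagorean triples: it looks for $z^{2}=x^{2}+a^{2}$ with $10^{2\alpha}\parallel x^{2}$ and $a^{2}$ having exactly $2\alpha$ digits, so that $z^{2}$ is the concatenation of the square $(x/10^{\alpha})^{2}$ with $a^{2}$; concrete families come from $a=10^{\alpha}-2^{\beta}$ (or $10^{\alpha}-5^{\beta}$), $n=2^{\beta}$ (resp.\ $5^{\beta}$), and $x=(a^{2}-n^{2})/(2n)$. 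Your Pell approach is cleaner and entirely self-contained for this single statement, whereas the paper's Pythagorean construction produces right-hand blocks of every even length and, more to the point, builds in large common factors among the resulting squares---precisely what is exploited in the divisibility-chain theorems that follow.

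One caveat worth recording: the paper seems to read ``none of them'' as also forbidding the two \emph{constituent} squares from being divisible by $10$, and its construction secures this (both $(x/10^{\alpha})^{2}$ and $a^{2}$ are coprime to $10$). Your recursion occasionally yields $10\mid a_{n}$ (already $a_{1}=80$, giving $64009=6400\,|\,9$ with $6400$ divisible by $10$). Under that stricter reading you should add the one-line observation that the sequence $(a_{n}\bmod 10,\,c_{n}\bmod 10)$ is periodic and hits residues with $a_{n}\not\equiv 0$ infinitely often (e.g.\ $a_{0}=2$, $a_{2}\equiv 8$), so passing to that subsequence finishes the job.
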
 
\begin{proof}
Frist, consider the following Diophantine equation on natural numbers:
$$ x^2 + a^2 = z^2 $$
Note that if there is $\alpha \in \mathbb{N}$, such that
$10^{2\alpha} \mid \mid x^{2}$, also the number of digits of $a^2$ is $2\alpha$ and $10 \nmid a$, then $z^2$ is a square number which is constructed by putting two square numbers together , none of them are divisible by $10$.
Now, suppose that $\mathbb{Z}=x+n$. For every $\alpha \in \mathbb{N}$, for both of the following cases,  $z^2$ is a square number which is constructed by putting two square numbers together , none of them are divisible by $10$.

\begin{enumerate}
\item ($0\leq \beta \leq \alpha$), $k=5^{\alpha}\times 2^{\alpha - \beta}-1$, $n=2^{\beta}, a=kn=5^{\alpha}\times 2^{\alpha} - 2^{\beta}$, $x=\frac{(a-n)(a+n)}{2n}$,
\item ($0\leq \beta \leq \alpha$), $k=5^{\alpha-\beta}\times 2^{\alpha}-1$, $n=2^{\beta}, a=kn=5^{\alpha}\times 2^{\alpha} - 5^{\beta}$, $x=\frac{(a-n)(a+n)}{2n}$.
\end{enumerate}

Because $a^{2}<10^{2\alpha}$ and in both cases for each arbitrary $\alpha$, we have respectively
\begin{enumerate}
\item min\{ $a^{2} : a=5^{\alpha}\times 2^{\alpha} - 2^{\beta}$ \} $>$ $5^{2\alpha -1}\times 2^{2\alpha -1}$,
\item min\{ $a^{2} : a=5^{\alpha}\times 2^{\alpha} - 5^{\beta}$ \} $>$ $5^{2\alpha -1}\times 2^{2\alpha -1}$.
\end{enumerate}
%$$4 \mid a^{2}$$

%Now, if we define $x_{0}^{2}=\frac{x^{2}}{4}$, then it is clear that
%$$10^{2\alpha} \mid x_{0}^{2} , 10^{2\alpha +1} \nmid x_{0}^{2}$$
%You can also define $a_{0}^{2}=\frac{a^{2}}{4}$, then it is clear that $a_{0}$, $x_{0}$ satisfy the first two conditions of Theorem 2.2.\\
%Now, it is enough to prove that the number of digits of $a_{0}^{2}$ is equal to $2\alpha$. Because if $a_{0}^{2}=2\alpha$, then $x_{0}^{2}+a_{0}^{2}$ will be a square number that is constructed by putting two square numbers together, none of them are divisible by 10. On the other hand, $x^{2}+a^{2}$ is a square number which is constructed by putting two square number together, none of them are divisible by 10. We also know that $x_{0}^{2}+a_{0}^{2} \mid x^{2}+a^{2}$. We claim that the number of digits of $a_{0}^{2}$ is equal to $2\alpha$. We have
%$$5^{2\alpha}+\frac{1}{4} - 5^{\alpha} > 2 \times 5^{2\alpha -1}\Rightarrow $$

Therefore, the number of digits of $a^{2}$in both cases is $2\alpha$.
\end{proof}
\begin{theorem}	
There are infinitely many 2-tuples $(c, d)$ that each c and d is constructed by putting two square numbers together that none of them are divisible by 10. Besides, $c \mid d$.
	\begin{proof}
	Let $\alpha \in \mathbb{N}$, $\alpha>2$. Suppose that $k=5^{\alpha}\times 2$, $n=2^{\alpha -1}, a=kn=5^{\alpha}\times 2^{\alpha} - 2^{\alpha -1}$, and $x=\frac{(a-n)(a+n)}{2n}$. Let $x_0= \frac{x}{2}$ and $a_0= \frac{a}{2}$. Now, $x^2+a^2$ and $x_0^2 + a_0^2$ are two square numbers which are constructed by putting two square numbers together , none of them are divisible by $10$ and $(x_0^2+a_0^2) \mid (x^2+a^2)$.

	\end{proof}
\end{theorem}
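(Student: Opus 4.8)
The plan is to build each pair $(c,d)$ from a single instance of the preceding theorem and then halve. I would take $d$ to be the square $z^{2}$ produced by case~(1) of the preceding theorem with the choice $\beta=\alpha-1$ (and $\alpha>2$); this is exactly the data $k=5^{\alpha}\times 2-1$, $n=2^{\alpha-1}$, $a=kn=5^{\alpha}\times 2^{\alpha}-2^{\alpha-1}$, $x=\frac{(a-n)(a+n)}{2n}$. A direct computation gives $a=2^{\alpha-1}(2\times 5^{\alpha}-1)$ and $x=5^{\alpha}\times 2^{\alpha}\times(5^{\alpha}-1)$, so that $x^{2}=10^{2\alpha}(5^{\alpha}-1)^{2}$. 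By the preceding theorem, $d:=z^{2}=x^{2}+a^{2}$ is the concatenation of the two squares $(5^{\alpha}-1)^{2}$ (high block) and $a^{2}$ (low block, of exactly $2\alpha$ digits), and neither block nor $d$ is divisible by $10$.

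Since $\alpha>2$, both $a$ and $x$ are even, so I set $x_{0}=x/2$, $a_{0}=a/2$ and $c:=x_{0}^{2}+a_{0}^{2}$. Then $c=\tfrac{1}{4}(x^{2}+a^{2})=d/4$, which gives $c\mid d$ at once; and because $z$ is even, $c=(z/2)^{2}$ is again a perfect square. Thus the whole theorem reduces to checking that $c$, like $d$, is a concatenation of two squares none of which is divisible by $10$.

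This last point is the one genuine obstacle, namely that halving preserves the concatenation pattern: I must show that $x_{0}^{2}$ still ends in exactly $2\alpha$ zeros and that $a_{0}^{2}$ still has exactly $2\alpha$ digits. For the high block, $x_{0}^{2}=10^{2\alpha}\bigl(\tfrac{5^{\alpha}-1}{2}\bigr)^{2}$, and I would confirm $10^{2\alpha}\mid\mid x_{0}^{2}$ by a valuation count: $5^{\alpha}-1$ is divisible by $4$ but coprime to $5$, so $x$ carries at least two more factors of $2$ than of $5$; removing one factor of $2$ therefore leaves the power of $5$ equal to $2\alpha$ and the power of $2$ strictly above $2\alpha$, so the trailing-zero count is unchanged and $\bigl(\tfrac{5^{\alpha}-1}{2}\bigr)^{2}$ is a perfect square coprime to $5$. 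For the low block I would write $a_{0}=5\times 10^{\alpha-1}-2^{\alpha-2}$ and use $2^{\alpha-2}<10^{\alpha-1}$ (valid for $\alpha>2$) to get $10^{2\alpha-1}<a_{0}^{2}<10^{2\alpha}$, so that $a_{0}^{2}$ has exactly $2\alpha$ digits; as $a_{0}$ is coprime to $5$, both $a_{0}^{2}$ and $c$ avoid divisibility by $10$. Hence $c$ is the concatenation of the squares $\bigl(\tfrac{5^{\alpha}-1}{2}\bigr)^{2}$ and $a_{0}^{2}$, as required.

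Letting $\alpha$ range over the integers greater than $2$ then yields infinitely many distinct pairs $(c,d)$, since the numbers grow with $\alpha$. The only calculation I would be careful about is the valuation/digit bookkeeping in the halving step, which rests on two elementary facts: the $2$-adic valuation of $5^{\alpha}-1$ is at least $2$ for every $\alpha$, and $2^{\alpha-2}<10^{\alpha-1}$ whenever $\alpha>2$.
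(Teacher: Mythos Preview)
Your proposal is correct and follows essentially the same approach as the paper: the same choice $\beta=\alpha-1$ in case~(1) of the preceding theorem, the same $d=x^{2}+a^{2}$ and $c=(x/2)^{2}+(a/2)^{2}=d/4$. The paper's proof merely states this construction without justification, whereas you supply the missing verification that the halved pair $(x_{0}^{2},a_{0}^{2})$ still forms a valid concatenation (the $2\alpha$ trailing zeros of $x_{0}^{2}$ via the valuation argument, and the $2\alpha$-digit length of $a_{0}^{2}$ via $2^{\alpha-2}<10^{\alpha-1}$); these details are genuinely needed and your computations are correct.
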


\begin{remark}
\label{remark}
if $n \in \mathbb{N}$. then
\begin{enumerate}
\item \label{first} The number of digits of $\frac{n}{4}$ either equals to the number of digits of n or equals to the number of digits of n minus one.
\item In two successive dividig of n by 4, the number of digits of achieved result, at least one unit (regarding the first part of this remark, at most two units) is less than the number of digits of n.
\end{enumerate}
\end{remark}

\begin{theorem}	
For each $r \in \mathbb{N}$, there is a chain of numbers $
l_{1} \mid l_{2} \mid l_{3} ... l_{r-1} \mid l_{r}
$ in which $l_{i}$ for $0 \leq i \leq r$ are square numbers that are not divisible by 10. Also each $l_{i}$ is constructed by putting two square numbers together.
\end{theorem}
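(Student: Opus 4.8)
The plan is to obtain the whole chain from a single base square by repeatedly halving, exactly as the previous theorem does in the case $r=2$. Recall the mechanism behind the first theorem: one produces a Pythagorean identity $z^2=x^2+a^2$ in which $x=10^{\alpha}p$ with $10\nmid p$, so that the top part $x^2/10^{2\alpha}=p^2$ is a perfect square not divisible by $10$, while $a^2$ occupies exactly the last $2\alpha$ digits and $10\nmid a$. Thus ``$z^2$ is a valid concatenation at gap $2\alpha$'' means precisely that the number of trailing zeros of the high square $x^2$ equals the digit count $2\alpha$ of the low square $a^2$, both parts being squares not divisible by $10$. I would work entirely inside this framework.

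Following the previous theorem, I would take the parameters of the first theorem with $\beta=\alpha-1$ and $\alpha\ge r$, so that $x=10^{\alpha}(5^{\alpha}-1)$, $a=10^{\alpha}-2^{\alpha-1}$ and $z=10^{\alpha}(5^{\alpha}-1)+2^{\alpha-1}$. Here $v_{2}(z)=\alpha-1\ge r-1$, so $z,\ z/2,\dots,z/2^{\,r-1}$ are all integers, and putting $l_{r-j}:=(z/2^{\,j})^{2}=z^{2}/4^{\,j}$ for $0\le j\le r-1$ gives $l_{1}\mid l_{2}\mid\cdots\mid l_{r}$ automatically, each being four times the next. For every $j$ one has the Pythagorean splitting $l_{r-j}=(x/2^{\,j})^{2}+(a/2^{\,j})^{2}$, so the only thing left is to verify that each $l_{r-j}$ really is a valid concatenation: its low square is $a^{2}/4^{\,j}$, whose digit count I would track step by step with Remark~\ref{remark}, and its high square is $x^{2}/4^{\,j}$, whose trailing-zero count is $\min\!\big(v_{2}(x^{2})-2j,\ v_{5}(x^{2})\big)$ with $v_{5}(x^{2})=2\alpha$.

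The hard part — and what I expect to be the genuine obstacle — is forcing these two quantities to coincide at every level. By Remark~\ref{remark} the digit count of $a^{2}/4^{\,j}$ falls slowly and irregularly (at most one digit per halving, at most two per two halvings), whereas the trailing-zero count of the high square is rigid: it remains $2\alpha$ while $v_{5}$ is binding and then drops in steps of two, always even. Because the Pythagorean split ties the low square to $a^{2}/4^{\,j}$, whose digit count simply cannot decrease by two per step, the matching ``digit count $=$ trailing-zero count, and even'' can hold only for a bounded number of consecutive levels, so a single base will not reach arbitrary $r$. To push through I would instead build the chain inductively, using a fresh triple at each level and letting the gap $2\alpha$ grow as I go, arranging the hypotenuses to form a divisibility chain $z_{1}\mid z_{2}\mid\cdots\mid z_{r}$; the essential difficulty then becomes realizing each successive valid square as an honest multiple of the previous one that still admits a clean split into two squares, neither divisible by $10$, at the larger gap. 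This two-sided digit-and-valuation bookkeeping — matching an irregularly decreasing digit count against a rigidly decreasing valuation while preserving divisibility — is where the real work lies, and Remark~\ref{remark} is exactly the tool meant to control the low-square side.
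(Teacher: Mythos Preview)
Your initial instinct --- start from the $\beta=\alpha-1$ triple and repeatedly halve --- is exactly what the paper does, and you abandon it for the wrong reason.  The paper never tries to keep the trailing--zero count of the high square equal to the digit count of the low square.  Instead it pins the trailing--zero count of $x_k^2$ at $2\alpha$ throughout (by restricting to $k\le v_2(5^{\alpha}-1)$, so only the factor $A=5^{\alpha}-1$ loses $2$'s while the $10^{2\alpha}$ is untouched) and places the concatenation cut wherever the current digit count of $a_k^2=a^2/4^k$ happens to sit.  That cut yields a genuine square--on--square concatenation precisely when the digit count has dropped from $2\alpha$ by an \emph{even} amount $2m$, since then the high part $x_k^2/10^{2\alpha-2m}=(A/2^k)^2\cdot 10^{2m}$ is still a perfect square.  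Remark~\ref{remark} now does the counting: the digit count of $a^2/4^k$ falls by at most one per halving and by at least one every two halvings, so an even total drop recurs at worst every three steps.  Hence $3(r-2)+1$ halvings already supply $r$ good levels.  To have that many halvings available one needs both $v_2(A)$ and $v_2(a)$ to be at least $3(r-2)+1$, which is why the paper takes $\alpha=2^{3(r-2)+2}$ (so that $5^{\alpha}-1=(5-1)(5+1)(5^2+1)\cdots(5^{2^{3(r-2)+1}}+1)$ with every factor even), not merely $\alpha\ge r$.

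So the gap in your plan is twofold.  First, $\alpha\ge r$ is far too small; you need $\alpha$ to be a large power of $2$ so that $5^{\alpha}-1$ carries enough factors of $2$.  Second, and more importantly, you impose the unnecessary constraint ``digit count $=$ trailing--zero count'' and then correctly observe it cannot persist --- but the paper only needs ``digit count $\le$ trailing--zero count, and even'', which \emph{does} recur often enough.  (One caveat: once the drop $2m$ is positive, the high constituent square $(A\cdot 10^{m}/2^{k})^{2}$ is itself divisible by $10$; the theorem as stated only forbids $10\mid l_i$, so this is admissible, though weaker than in the two preceding theorems.)  Your fallback inductive scheme with fresh triples at each level is left entirely unspecified and is not needed here.
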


	\begin{proof}
    Suppose that $r \geq 2$ is a natural number. Now let
	\begin{align*}
	\alpha = 2^{3(r-2)+2}, a=5^{\alpha}\times 2^{\alpha} - 2^{\alpha -1}, n=2^{\alpha -1} , x=\frac{(a-n)(a+n)}{2n}=(5^{\alpha}-1)(2^{\alpha}\times 5^{\alpha})\\
	\Rightarrow x=(5-1)\times(5+1)\times(5^{2}+1)\times \dots \times (5^{2^{3(r-2)+1}}+1)\times 2^{\alpha}\times 5^{\alpha}
	\end{align*}
	Now we define
	\begin{align*}
	A=(5-1)(5+1)(5^{2}-1)(5^{2}+1) \dots  (5^{2^{3(r-2)+1}}+1)
	\end{align*}
	Therfore, it is clear that
	\begin{align*}
	4^{3(r-2)+1}\mid A^{2}, 4^{3(r-2)+1}\mid a^{2}
	\end{align*}
	Now, for each $1 \leq k \leq 3(r-2)+1$, we define
	\begin{align*}
	x_{1}^{2} = \frac{A^{2}}{4}\times 2^{2\alpha} \times 5^{2\alpha}, a_{1}^{2}=\frac{a^{2}}{4},\\
	x_{2}^{2} = \frac{A^{2}}{4^{2}}\times 2^{2\alpha} \times 5^{2\alpha}, a_{2}^{2}=\frac{a^{2}}{4^{2}},\\\
	\vdots \\
	x_{k}^{2} = \frac{A^{2}}{4^{3(r-2)+1}}\times 2^{2\alpha} \times 5^{2\alpha}, a_{k}^{2}=\frac{a^{2}}{4^{3(r-2)+1}}
	\end{align*}
	
It is clear that $x_{1}^{2}+a_{1}^{2}$ is a square number which is not only indivisble by 10. but also is constructed by putting two square numbers together, none of them are divisible by 10. Additionally, regarding Remark \ref{remark}, the number of digits of $a_{2}^{2}$ is one unit less that $2\alpha$. Therefore $x_{2}^{2}+a_{2}^{2}=\frac{a^{2}}{4}$ is not necessarily constructed by putting two square numbers together, but we can say that it is constructed by putting one zero between two square numbers, none of them are divisible by 10. In the process of induction which we for obtaining $a_{i}s$, the worst case for $a_{3}$ (the case in which the maximum number of dividing occurs but the minimum number of the elements of our chain reveals) is that the number of digits of $a_{3}^{2}$ is also one unit less than $2\alpha$. (Because in this case $x_{3}^{2}+a_{3}^{2}$ is also a square number which is constructed by putting one zero between two square numbers, none of them are divisible by 10. Hence, this number cannot be a number of our chain, but if it does not happen for $a_{3}^{2}$, then regarding Remark \ref{remark}, the number of digits of $a_{3}^{2}$ is two units less than $2\alpha$, and in this case. we can assume that $x_{3}^{2}+a_{3}^{2}$ is $l_{r-2}$ in our chain.) but in this case, the number of digits of $a_{4}^{2}$ is necessarily two units less than $2\alpha$. Therefore, $x_{4}^{2}+a_{4}^{2}$ is constructed by putting $a_{4}^{2}$ and $x_{4}^{2}\times 100$ together. Hence, we can assume that $x_{4}^{2}+a_{4}^{2}$ is $l_{r-2}$ in our chain. Generally, we should know that in the process of successive dividing, we obtain two number of our chain in the first dividing. For obtaining $r-2$ other numbers of our chain, the worst case is as the following
\begin{enumerate}
\item[$\bullet$] The number of digits of $\frac{a^{2}}{4^{2}}$ is necessarily one unit less than $2\alpha$.
\item[$\bullet$]The number of digits of $\frac{a^{2}}{4^{3}}$ is one unit less than $2\alpha$.
\item[$\bullet$]The number of digits of $\frac{a^{2}}{4^{4}}$ is necessarily two units less than $2\alpha$.
\item[$\bullet$]The number of digits of $\frac{a^{2}}{4^{5}}$ is three units less than $2\alpha$.
\item[$\bullet$]The number of digits of $\frac{a^{2}}{4^{6}}$ is three units less than $2\alpha$.
\item[$\bullet$]The number of digits of $\frac{a^{2}}{4^{7}}$ is necessarily four units less than $2\alpha$.\\
\vdots
\end{enumerate}
Therefore, in general after getting $l_{r-2}$, we can obtain another number of our chain by at most three times dividing. Hence, for a chain with the length of $r$, after $4^{3(r-2)+1}$ times of dividing, we can construct the chain completely. 
\end{proof}
Finally, the following Corollary can be concluded.
\begin{corollary}
Suppose that $s\in \mathbb{N}$, then there are infinitely many square numbers which are constructed by putting $s$ zeros between two square numbers, none of them are divisible by 10.
\end{corollary}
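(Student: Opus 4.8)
The plan is to reuse the Pythagorean construction from the proof of Theorem 1.1 and then divide repeatedly by $4$, reading off the number of zeros that open up between the two component squares as the number of digits lost by the lower square. Fix $s\in\mathbb{N}$ and take $\alpha=2^{M}$ with $M$ large (to be constrained below). Following case (1) of Theorem 1.1 with $\beta=\alpha-1$, I would set $n=2^{\alpha-1}$, $a=5^{\alpha}2^{\alpha}-2^{\alpha-1}=10^{\alpha}-2^{\alpha-1}$ and $x=\frac{(a-n)(a+n)}{2n}=(5^{\alpha}-1)\,10^{\alpha}$, so that $z^{2}=x^{2}+a^{2}$ with $z=x+n$. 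Here $x^{2}=(5^{\alpha}-1)^{2}\,10^{2\alpha}$ has exactly $2\alpha$ trailing zeros (since $5\nmid 5^{\alpha}-1$), its top part $(5^{\alpha}-1)^{2}$ is a square, $a^{2}$ has exactly $2\alpha$ digits, and $10\nmid a$; this is precisely the gap-$0$ situation of Theorem 1.1.

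Next I would divide by $4^{k}$. Write $g(k)$ for the number of digits of $a^{2}/4^{k}$, so $g(0)=2\alpha$. Because $5^{\alpha}-1$ is even with $v_{2}(5^{\alpha}-1)=v_{2}(\alpha)+2=M+2$ (lifting the exponent), for every $k\le M+2$ the number $T_{k}:=(5^{\alpha}-1)/2^{k}$ is an integer coprime to $5$, whence $(x/2^{k})^{2}=T_{k}^{2}\,10^{2\alpha}$ is a perfect square still ending in \emph{exactly} $2\alpha$ zeros. Since $v_{2}(a)=\alpha-1\ge M+2$, the number $a/2^{k}$ is also an integer and is coprime to $5$, so $a^{2}/4^{k}$ never ends in $0$. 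Hence $(z/2^{k})^{2}=(x/2^{k})^{2}+(a/2^{k})^{2}$ is a perfect square, not divisible by $10$, whose decimal expansion is the square $T_{k}^{2}$, followed by $2\alpha-g(k)$ zeros, followed by the square $a^{2}/4^{k}$ written in full. In other words, for each $k\le M+2$ the gap equals $2\alpha-g(k)$.

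It then remains to hit the value $s$. By Remark \ref{remark}, $g(k+1)\in\{g(k),g(k)-1\}$, so $g$ decreases by $0$ or $1$ at each step, while two successive divisions lower the digit count by at least one; thus $g(M+2)\le 2\alpha-\lfloor (M+2)/2\rfloor$. Choosing $M\ge 2s$ (and $M\ge 2$) forces $g$ to step through every integer from $2\alpha$ down past $2\alpha-s$, so some $k^{*}\le M+2$ satisfies $g(k^{*})=2\alpha-s$, i.e. $(z/2^{k^{*}})^{2}$ has gap exactly $s$. Finally, letting $M$ range over all integers $\ge 2s$ yields squares whose size grows with $M$, hence infinitely many distinct examples.

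The main obstacle is the tension in the choice of $k$: opening a gap of size $s$ forces roughly $2s$ successive halvings of $a$, yet the top block $(x/2^{k})^{2}$ remains a perfect square with the \emph{exact} trailing-zero length $2\alpha$ only while $2^{k}\mid 5^{\alpha}-1$, i.e. $k\le v_{2}(\alpha)+2$. Reconciling these two requirements is what forces $\alpha$ to be a high power of $2$; once $v_{2}(\alpha)$ is large the discrete intermediate-value argument on $g$ closes the gap, and the bookkeeping that neither component square nor the assembled number is divisible by $10$ collapses to the single facts that $5\nmid 5^{\alpha}-1$ and $5\nmid a$.
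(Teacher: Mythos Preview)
Your argument is correct and follows essentially the same route the paper takes: the corollary is stated without its own proof, as a by-product of the construction in Theorem~1.3, namely the Pythagorean triple with $a=10^{\alpha}-2^{\alpha-1}$, $x=(5^{\alpha}-1)\cdot 10^{\alpha}$ for $\alpha$ a large power of~$2$, followed by repeated division by $4$ while tracking the digit count of $a^{2}/4^{k}$ via Remark~\ref{remark}. Your write-up is in fact more explicit than the paper's: you compute $v_{2}(5^{\alpha}-1)=v_{2}(\alpha)+2$ by lifting the exponent (the paper reaches the same divisibility through the telescoping factorisation of $5^{2^{m}}-1$) and you spell out the discrete intermediate-value step showing the gap $2\alpha-g(k)$ must hit the prescribed value~$s$.
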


\end{document}